\newtheorem{thm}{Theorem}[section]
\newtheorem{cor}[thm]{Corollary}
\newtheorem{lem}[thm]{Lemma}
\numberwithin{equation}{section}
\begin{document}

\title{New characterizations of g-Drazin inverse in a Banach algebra}

\author{Huanyin Chen}
\author{Marjan Sheibani$^*$}
\address{
Department of Mathematics\\ Hangzhou Normal University\\ Hang -zhou, China}
\email{<huanyinchen@aliyun.com>}
\address{Women's University of Semnan (Farzanegan), Semnan, Iran}
\email{<sheibani@fgusem.ac.ir>}

 \thanks{$^*$Corresponding author}

\subjclass[2010]{15A09, 32A65.} \keywords{g-Drazin inverse; anti-triangular matrix; Banach algebra.}

\begin{abstract}
In this paper, we present a new characterization of g-Drazin inverse in a Banach algebra. We prove that an element $a$ is a Banach algebra has g-Drazin inverse if and only if there exists $x\in \mathcal{A}$ such that $ax=xa, a-a^2x\in \mathcal{A}^{qnil}.$ As application, we obtain
the sufficient and necessary conditions for the existence of the g-Drain inverse for certain $2\times 2$ anti-triangular matrices over a Banach algebra.
These extend the results of Koliha (Glasgow Math. J., {\bf 38}(1996), 367--381), Nicholson (Comm. Algebra, {\bf 27}(1999), 3583--3592 and Zou et al. (Studia Scient. Math. Hungar., {\bf 54}(2017), 489--508).
\end{abstract}

\maketitle

\section{Introduction}

Let $\mathcal{A}$ be a complex Banach algebra with an identity $1$. We define $a\in \mathcal{A}$ has g-Drazin inverse (i.e., generalized Drazin inverse) if there exists $b\in \mathcal{A}$ such that
$$ab=ba, b=bab, a-a^2b\in \mathcal{A}~\mbox{is quasinilpotent}.$$ Such $b$ is unique, if exists, and denote it by $a^d$.  If we replace quasinilpotent in the above definition with nilpotent, then $b$ is called the Drazin inverse of $a$. The g-Drazin inverse plays an important role in matrix and operator theory.
Many authours have been studying this subject from different views (see~\cite{DCW,DW,LCC,M} and~\cite{ZM}). In this paper we provide some new characterizations for the g-Drazin inverse of an element in a Banach algebra. In Section 2, we drop the regular condition for the g-Drazin invertibility off the definition. We then thereby prove that an element $a$ in a Banach algebra $\mathcal{A}$ has g-Drazin inverse if and only if there exist an idempotent $e$, a unit $u$ and a quasinilpotent $w$ which commute each other such that $a=eu+w$. This helps us to generalize~\cite[Theorem 3]{N} and prove that an element $a\in \mathcal{A}$ has g-Drazin inverse if and only if there exists an idempotent $e\in comm(a)$ such that $eae\in [e\mathcal{A}e]^{-1}, (1-e)a(1-e)\in [(1-e)\mathcal{A}(1-e)]^{qnil}$.
It was firstly posed by Campbell that the solutions to singular systems of differential equations is determined by the g-Drazin invertibility of the $2\times 2$ anti-triangular block matrix (see~\cite{B}). The g-Drazin inverse of such special matrices attract many authors (see~\cite{CD,CI,H,LY} and~\cite{ZC}).
In Section 3, we apply the results in Section 2 for certain anti-triangular block matrices over a Banach algebra and provide some necessary and sufficient conditions for such matrices to be g-Drazin invertible. These also extend ~\cite[Theorem 4.1]{CD}, ~\cite[Theorem 2.6]{ZMC} for the g-Drazin inverse.

Throughout the paper, we use $\mathcal{A}^{-1}$ to denote the set of all units in $\mathcal{A}$. $\mathcal{A}^{d}$ indicates the set of all g-Drazin invertible elements in $\mathcal{A}$. Let $a\in \mathcal{A}$. The commutant of $a\in \mathcal{A}$ is defined by $comm(a)=\{x\in
\mathcal{A}~|~xa=ax\}$. ${\Bbb N}$ stands for the set of all natural numbers.

\section{g-Drazin inverse}

The aim of this section is to provide a new characterization of g-Drazin inverse in a Banach algebra. We shall prove that regular condition $"x=xax"$ can be dropped from the definition of g-Drazin inverse. An element $a\in \mathcal{A}$ has strongly Drazin inverse if it is the sum of an idempotent and a quasinilpotent that commute (see~\cite{CM}). We begin with a characterization of strongly Drazin inverse.

\begin{lem} Let $a\in \mathcal{A}$. Then the following are equivalent:\end{lem}
\begin{enumerate}
\item [(1)]{\it $a\in \mathcal{A}$ has strongly g-Drazin inverse.} \vspace{-.5mm}
\item [(2)]{\it $a-a^2\in \mathcal{A}^{qnil}$.}
\end{enumerate}
\begin{proof} See~\cite[Lemma 2.2]{CM}.\end{proof}

We come now to the demonstration for which this paper has been developed.

\begin{thm} Let $a\in \mathcal{A}$. Then the following are equivalent:\end{thm}
\begin{enumerate}
\item [(1)]{\it $a\in \mathcal{A}^d$.} \vspace{-.5mm}
\item [(2)]{\it There exists some $x\in comm(a)$ such that $a-a^2x\in \mathcal{A}^{qnil}$.}
\end{enumerate}
\begin{proof} $(1)\Rightarrow (2)$ This is obvious by choosing $x=a^d$.

$(2)\Rightarrow (1)$ By hypothesis, there exists some $x\in comm(a)$ such that $a-a^2x\in \mathcal{A}^{qnil}$. Set $z=xax$.
Then $z\in comm(a)$. We check that $$\begin{array}{lll}
a-a^2z&=&a-axaxa\\
&=&(1+ax)(a-a^2x)\\
&\in& \mathcal{A}^{qnil},\\
z-z^2a&=&xax-xaxaxax\\
&=&x(a-a^2x)x+xax(a-a^2x)x\\
&\in& \mathcal{A}^{qnil}.
\end{array}$$

$$az-(az)^2=(a-a^2z)z\in \mathcal{A}^{qnil}.$$ Then we have idempotent $e\in comm^2(az)$ such that $az-e\in \mathcal{A}^{qnil}$.
We easily check that
$$(a+1-az)((z+1-az)=1+(a-a^2z)(1-z)+(z-z^2a).$$
Hence, $$\begin{array}{lll}
a+1-e&=&(a+1-az)+(az-e)\in \mathcal{A}^{-1},\\
a(1-e)&=&(a-a^2z)+a(az-e)\in\mathcal{A}^{qnil}.
\end{array}$$
Since $a\in comm(az)$, we have $ea=ae$. That is, $a\in \mathcal{A}$ is quasipolar. Therefore $a\in \mathcal{A}^d$, by~\cite[Theorem 4.2]{K}.\end{proof}

\begin{cor} Let $a\in \mathcal{A}$. Then the following are equivalent:\end{cor}
\begin{enumerate}
\item [(1)]{\it $a\in \mathcal{A}^d$.} \vspace{-.5mm}
\item [(2)]{\it There exists a invertible $u\in comm(a)$ such that $a-a^2u\in \mathcal{A}^{qnil}$.}
\vspace{-.5mm}
\item [(3)]{\it $au$ has strongly g-Drazin inverse for some invertible $u\in comm(a)$.}
\end{enumerate}
\begin{proof} $(1)\Rightarrow (3)$ In view of~\cite[Theorem 4.2]{K}, there exists an idempotent $p\in comm(a)$ such that $u:=a+p\in \mathcal{A}^{-1}$ and
$ap\in \mathcal{A}^{qnil}$. Hence, $ap=a(u-a)\in \mathcal{A}^{qnil}$. Then $a-a^2u^{-1}\in \mathcal{A}^{qnil}$.
Thus $au^{-1}-[au^{-1}]^2\in \mathcal{A}^{qnil}$. Therefore $au$ has strongly g-Drazin inverse by Lemma 2.1.

$(3)\Rightarrow (2)$ In light of Lemma 2.1, $au-(au)^2\in \mathcal{A}^{qnil}$ for some invertible $u\in comm(a)$.
Hence $a-a^2u\in \mathcal{A}^{qnil}$, as required.

$(2)\Rightarrow (1)$ This is obvious by Theorem 2.2.\end{proof}

We are now ready to extend~\cite[Theorem 4.2]{K} as follows.

\begin{cor} Let $a\in \mathcal{A}$. Then the following are equivalent:\end{cor}
\begin{enumerate}
\item [(1)]{\it $a\in \mathcal{A}^d$.} \vspace{-.5mm}
\item [(2)]{\it There exists some $p\in comm(a)$ such that $a+p\in \mathcal{A}^{-1}$ and $ap\in \mathcal{A}^{qnil}$.}
\end{enumerate}
\begin{proof} $(1)\Rightarrow (2)$ This is clear by~\cite[Theorem 4.2]{K}.

$(2)\Rightarrow (1)$ Set $b=(a+p)^{-1}(1-p)$. Then $b\in comm(a)$ and
$$\begin{array}{lll}
ab&=&a(a+p)^{-1}(1-p)\\
&=&(a+p)(a+p)^{-1}(1-p)-p(a+p)^{-1}(1-p)\\
&=&1-p-p(a+p)^{-1}(1-p).
\end{array}$$ In view of~\cite[Lemma 2.11]{ZMC}, we have $$\begin{array}{lll}
a-a^2b&=&a(1-ab)\\
&=&ap[1+(a+p)^{-1}(1-p)]\\
&\in & \mathcal{A}^{qnil}.
\end{array}$$ This completes the proof by Theorem 2.2.\end{proof}

The next result generalizes ~\cite[Proposition 13.1.18]{C}.

\begin{thm} Let $a\in \mathcal{A}$. Then the following are equivalent:\end{thm}
\begin{enumerate}
\item [(1)]{\it $a\in \mathcal{A}^d$.} \vspace{-.5mm}
\item [(2)]{\it There exist an idempotent $e$, a unit $u$ and a quasinilpotent $w$ which commute each other such that $a=eu+w$.}
\end{enumerate}
\begin{proof} $(1)\Rightarrow (2)$ By hypothesis, there exists a invertible $u\in comm(a)$ such that $a-a^2u\in \mathcal{A}^{qnil}$. Then
$(u^{-1}a)^2-u^{-1}a\in \mathcal{A}^{qnil}$. In light of Lemma 2.1, there exists $e^2=e\in comm^2(u^{-1}a)$ such that $w:=u^{-1}a-e\in \mathcal{A}^{qnil}$.
Hence, $a=ue+uw$. Clearly, $eu=ue$ and $ea=ae$; hence, $uw=wu,(ue)(uw)=(uw)(ue)$ and $uw\in \mathcal{A}^{qnil}$, as required.

$(2)\Rightarrow (1)$ Write $a=ue+w$ for an idempotent $e$, an invertible $u$ and a quasinilpotent $w$ which commute each other. Then $(u^{-1}a)^2-u^{-1}a\in \mathcal{A}^{qnil}$. Then $a-a^2u^{-1}\in \mathcal{A}^{qnil}$, as asserted.\end{proof}

\begin{cor} Let $a\in \mathcal{A}^d$. Then $a$ is the sum of two units in $\mathcal{A}$.\end{cor}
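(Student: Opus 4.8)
The plan is to feed the structure theorem just proved into a corner decomposition and split each corner separately. By Theorem 2.6 I may write $a=eu+w$ with $e^2=e$, $u\in\mathcal{A}^{-1}$, $w\in\mathcal{A}^{qnil}$, and $e,u,w$ pairwise commuting. Since $e$ commutes with $a$, the element is block diagonal along the complementary idempotents $e,1-e$: a short computation (using $eu=ue$, $ew=we$) gives $ea(1-e)=(1-e)ae=0$, together with $eae=eu+ew$ and $(1-e)a(1-e)=(1-e)w$, so that $a=eae+(1-e)a(1-e)$. In the corner $e\mathcal{A}e$ (identity $e$) the summand $eu$ is invertible with inverse $eu^{-1}$ and $ew$ is a commuting quasinilpotent, so $eae$ is a unit of $e\mathcal{A}e$; in the corner $(1-e)\mathcal{A}(1-e)$ (identity $1-e$) the element $(1-e)w$ is quasinilpotent, so its spectrum computed in that corner is $\{0\}$.

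The key manoeuvre is then to split each corner by subtracting a suitable nonzero scalar multiple of its local identity. I would pick scalars $\lambda,\mu\neq 0$ with $\lambda$ lying outside the spectrum of $eae$ in $e\mathcal{A}e$ (a compact set, so such $\lambda$ exists); then $eae-\lambda e$ and $\lambda e$ are both units of $e\mathcal{A}e$, while $(1-e)w-\mu(1-e)$ is a unit of $(1-e)\mathcal{A}(1-e)$ (as $\mu\neq 0$ is not in the corner spectrum $\{0\}$ of $(1-e)w$) and $\mu(1-e)$ is a unit there too. Setting
$$ s=(eae-\lambda e)+\big((1-e)w-\mu(1-e)\big),\qquad t=\lambda e+\mu(1-e), $$
the block decomposition gives $a=s+t$, and both $s,t$ are genuine units of $\mathcal{A}$.

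The one point that needs care, and which I would treat as the main obstacle, is exactly the last claim: a unit of $e\mathcal{A}e$ plus a unit of $(1-e)\mathcal{A}(1-e)$ is a unit of $\mathcal{A}$. This rests on the vanishing of all cross terms $ex(1-e)$, which the commuting idempotent guarantees; concretely the inverse of such a sum is the sum of the two corner inverses, e.g. $t^{-1}=\lambda^{-1}e+\mu^{-1}(1-e)$. Finally I would remark that the hypothesis $a\in\mathcal{A}^d$ is in fact not needed: in any complex unital Banach algebra $\sigma(a)$ is bounded, so choosing a single scalar $\lambda\neq 0$ with $|\lambda|>\|a\|$ already yields $a=(a-\lambda 1)+\lambda 1$ as a sum of two units. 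The decomposition argument above is simply the version that stays inside the framework of Theorem 2.6.
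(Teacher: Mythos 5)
Your proof is correct, but it takes a genuinely different route from the paper's. The paper applies the factorization theorem (Theorem 2.5 in its numbering) not to $a$ but to $\frac{a}{2}$, writing $\frac{a}{2}=eu+w$, and then uses the purely algebraic identity $a=2eu+2w=(2e-1)u+u+2w=(2e-1)u+u(1+2u^{-1}w)$: since $(2e-1)^2=1$, the factor $2e-1$ is an involution and $(2e-1)u$ is a unit, while $1+2u^{-1}w$ is a unit because $u^{-1}w$ is a quasinilpotent commuting with everything. This is a two-line manipulation with no corner or spectral analysis. Your argument instead splits $a$ along the Peirce corners of $e$ and subtracts suitable nonzero scalars from each block; the steps all check out (the cross terms $ea(1-e)$ and $(1-e)ae$ do vanish because $e$ commutes with $u$ and $w$, and a unit of $e\mathcal{A}e$ plus a unit of $(1-e)\mathcal{A}(1-e)$ is a unit of $\mathcal{A}$ with the inverse you describe), but it leans on compactness of the spectrum and hence on the Banach-algebra structure. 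Your closing observation is actually the sharpest point of the proposal: in a complex unital Banach algebra every element satisfies $a=(a-\lambda 1)+\lambda 1$ for any $0\neq\lambda\notin\sigma(a)$, so the hypothesis $a\in\mathcal{A}^d$ is superfluous and the corollary is an instance of a completely general fact. What the paper's manipulation buys in exchange is independence from the analytic setting: $(2e-1)u+u(1+2u^{-1}w)$ makes sense in any ring where $2$ is invertible and such a decomposition of $a/2$ exists, which is presumably why the authors argue that way.
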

\begin{proof} Since $a\in \mathcal{A}^d$, it follows by~\cite[Theorem 3.11]{ZMC} that $\frac{a}{2}\in \mathcal{A}^d$. In view of Theorem 2.5,  there exist an idempotent $e$, a unit $u$ and a quasinilpotent $w$ which commute each other such that $\frac{a}{2}=eu+w$. Hence, $a=2eu+2w=(2e-1)u+u+2w=(2e-1)u+u(1+2u^{-1}w).$ Since $(2e-1)^2=1$ and $1+2u^{-1}w\in \mathcal{A}^{-1}$, $a$ is the sum of two units, as asserted.\end{proof}

\begin{thm} Let $a\in \mathcal{A}$. Then the following are equivalent:\end{thm}
\begin{enumerate}
\item [(1)]{\it $a\in \mathcal{A}^d$.} \vspace{-.5mm}
\item [(2)]{\it There exist an idempotent $e\in comm(a)$ such that $eae\in [e\mathcal{A}e]^{-1}, (1-e)a(1-e)\in [(1-e)\mathcal{A}(1-e)]^{qnil}$.}
\end{enumerate}
\begin{proof}  $(1)\Rightarrow (2)$ By virtue of Theorem 2.5, there exist an idempotent $e$, a unit $u$ and a
quasinilpotent $w$ which commute each other such that $a=eu+w$. Then $eae=eu(1+u^{-1}w)\in [e\mathcal{A}e]^{-1}$. Moreover, we have
$(1-e)a(1-e)=(1-e)w\in [(1-e)\mathcal{A}(1-e)]^{qnil}$, as desired.

$(2)\Rightarrow (1)$ Suppose there exists an idempotent $e\in comm(a)$ such that  $eae\in [e\mathcal{A}e]^{-1}, (1-e)a(1-e)\in [(1-e)\mathcal{A}(1-e)]^{qnil}$. Then
$a=ea+(1-e)a=e[eae+1-e]+(1-e)a$. In view of~\cite[Lemma 2.11]{ZMC}, $(1-e)a\in \mathcal{A}^{qnil}$. Obviously, $eae+1-e\in \mathcal{A}^{-1}$. According to Theorem 2.5,
$a$ has g-Drazin inverse, as asserted.\end{proof}

Let $\alpha\in \mathcal{A}=End(M_k)$. The submodule $P$ of $M$ is $\alpha$-invariant provided that $\alpha (P)\subseteq P$ (see~\cite{N}). We now derive

\begin{cor} Let $\alpha\in \mathcal{A}=End(M_k)$. Then the following are
equivalent:\end{cor}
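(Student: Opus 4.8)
The plan is to obtain this corollary as a direct module-theoretic translation of Theorem 2.7, using the standard bijection between idempotents of $\mathcal{A}=End(M_k)$ and direct sum decompositions of $M$. Assuming condition (2) is the expected statement---namely that there is an $\alpha$-invariant decomposition $M=P\oplus Q$ with $\alpha|_P$ an automorphism of $P$ and $\alpha|_Q$ quasinilpotent in $End(Q)$---the whole proof reduces to bookkeeping on corner rings.

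First I would set up the correspondence. An idempotent $e\in\mathcal{A}$ yields $M=eM\oplus(1-e)M$; writing $P=eM$ and $Q=(1-e)M$, I claim $e\in comm(\alpha)$ if and only if both $P$ and $Q$ are $\alpha$-invariant. Indeed, if $e\alpha=\alpha e$ then for $p\in eM$ we have $\alpha(p)=\alpha ep=e\alpha p\in eM$, so $P$ is $\alpha$-invariant, and symmetrically for $Q$; conversely, if both summands are $\alpha$-invariant then $\alpha$ commutes with the projection $e$ of $M$ onto $P$ along $Q$, since on each summand the two composites $e\alpha$ and $\alpha e$ agree. Next I would invoke the standard ring isomorphisms $e\mathcal{A}e\cong End(eM)$ and $(1-e)\mathcal{A}(1-e)\cong End((1-e)M)$, with units $e$ and $1-e$ respectively. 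Under the first, $eae$ corresponds to the restriction $\alpha|_P$ (for $p\in eM$ one computes $eae\cdot p=e\alpha(p)=\alpha(p)$ because $\alpha(p)\in P$), and under the second $(1-e)a(1-e)$ corresponds to $\alpha|_Q$.

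With these identifications, the two spectral conditions of Theorem 2.7 translate verbatim: $eae\in[e\mathcal{A}e]^{-1}$ says precisely that $\alpha|_P$ is an automorphism of $P$, and $(1-e)a(1-e)\in[(1-e)\mathcal{A}(1-e)]^{qnil}$ says precisely that $\alpha|_Q$ is quasinilpotent in $End(Q)$. Hence condition (1), $\alpha\in\mathcal{A}^d$, is equivalent by Theorem 2.7 to the existence of an idempotent $e\in comm(\alpha)$ with $eae$ invertible in its corner and $(1-e)a(1-e)$ quasinilpotent in its corner, which via the correspondence above is exactly condition (2). The main (and essentially only nontrivial) obstacle is verifying that the corner-ring isomorphisms genuinely carry $eae$ and $(1-e)a(1-e)$ to the restrictions $\alpha|_P$ and $\alpha|_Q$ and that they preserve invertibility and quasinilpotency; once that routine verification is recorded, the equivalence is immediate and no further computation is required.
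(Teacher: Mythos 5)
Your proposal is correct and follows essentially the same route as the paper: both directions go through Theorem 2.7 via the correspondence between idempotents in $comm(\alpha)$ and $\alpha$-invariant decompositions $M=P\oplus Q$, together with the identification of the corner rings $e\mathcal{A}e$ and $(1-e)\mathcal{A}(1-e)$ with $End(P)$ and $End(Q)$. The only difference is one of packaging: where you invoke the corner-ring isomorphisms abstractly and note that they preserve invertibility and quasinilpotency, the paper carries out that verification by hand (extending a commuting $\gamma\in End(Q)$ to $\overline{\gamma}$ on $M$ and checking directly that $1_Q-\alpha|_Q\gamma$ is bijective), which is the same content made explicit.
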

\begin{enumerate}
\item [(1)]{\it $\alpha\in \mathcal{A}^d$.}
\vspace{-.5mm}
\item [(2)]{\it $M=P\oplus Q$, where $P$ and $Q$ are $\alpha$-invariant, $\alpha|_P\in [End(P)]^{-1}$, $\alpha|_Q\in End(Q)^{qnil}$. The corresponding PQPQ-decomposition looks like
$$\begin{array}{cccccc}
M&=&P&\bigoplus &Q&\\
&\alpha\mid_P=unit &\downarrow &&\downarrow&\alpha\mid_Q=quasinilpotent \\
M&=&P&\bigoplus &Q&.
\end{array}$$}
\end{enumerate}
\begin{proof} $(1)\Rightarrow (2)$ In view of Theorem 2.7, there exist an idempotent $e\in comm(\alpha)$ such that $e\alpha e\in [e\mathcal{A}e]^{-1}, (1-e)\alpha(1-e)\in [(1-e)\mathcal{A}(1-e)]^{qnil}$. Set $P=Me $ and $Q=M(1-e)$. Then
$M=P\oplus Q$. As $e\in comm(\alpha)$, we see that $P$ and $Q$
are $\alpha$-invariant.

Write $(e\alpha e)^{-1}=e\beta e$. Then one easily checks that $[\alpha|_P]^{-1}=\beta|_P$. Let $\gamma\in End(Q)\cap comm(\alpha|_Q)$. We will suffice to prove
$1_Q-\alpha|_Q \gamma\in [End(P)]^{-1}$.
$$\begin{array}{llll}
1_Q-\alpha|_Q \gamma: &Q&\to &Q\\
&p&\mapsto &q-(q)\alpha \gamma .
\end{array}
$$ Define
$\overline{\gamma}:M\to M$ given by $(p+q)\overline{\gamma}
=(q)\gamma$ for any $p\in P,q\in Q$. Set $f=1-e$. If $(q)\big(1_Q-\alpha|_Q \gamma\big)=0$, then $(qf)\big(f
-(f\alpha )f \overline{\gamma}f\big)=0$. As $\alpha f\in
\big(f\mathcal{A}f\big)^{qnil}$, we get $qf=0$. This implies that
$1_Q-\alpha|_Q\in End(Q)$ is an $R$-monomorphism. For any $q\in
Q$. Choose $z=(qf)\big(f -(f\alpha )f
\overline{\gamma}f\big)^{-1}\in Q$. Then $(z)\big(1_Q-\alpha|_Q \gamma\big)=q$; hence, $1_Q-\alpha|_Q
\gamma\in End(Q)$ is an $R$-epimorphism. Thus $1_Q-\alpha|_Q \gamma\in [End(Q)]^{-1}$, and so $\alpha|_Q\in End(Q)^{qnil}$.

$(2)\Rightarrow (1)$ Let $e: M=P\oplus Q\to P$ be the projection on $P$. In view of~\cite[Lemma 2]{N}, $e^2=e\in comm(\alpha)$. Moreover, $P=Me$ and $Q=M(1-e)$.
Since $\alpha|_P\in [End(P)]^{-1}$, we see that $e\alpha e\in (e\mathcal{A}e)^{-1}$.
It follows from $(1-e)\alpha(1-e)\in [(1-e)\mathcal{A}(1-e)]^{qnil}$ that $(1-e)\alpha (1-e)\in [(1-e)\mathcal{A}(1-e)]^{qnil}$. This completes the proof by
Theorem 2.7.\end{proof}

\section{anti-triangular matrices}

In this section we apply Theorem 2.2 to block matrices over a Banach algebra and present necessary and sufficient conditions for the existence of the
g-Drazin inverse for a class of $2\times 2$ anti-triangular block matrices. We now derive

\begin{lem} Let $M=\left(
         \begin{array}{cc}
          1 & 1 \\
          a & 0 \\
          \end{array}
         \right)\in M_2(\mathcal A)$. Then\end{lem}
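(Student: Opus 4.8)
The plan is to reduce the g-Drazin invertibility of $M$ to that of the single entry $a$, i.e.\ to prove the equivalence $M\in M_2(\mathcal{A})^d \Leftrightarrow a\in\mathcal{A}^d$. The starting point is the identity obtained by squaring,
\[
M^2-M=\begin{pmatrix}a&0\\0&a\end{pmatrix},
\]
together with an explicit description of the commutant: a direct computation shows that $X\in comm(M)$ if and only if $X=\left(\begin{smallmatrix}p&q\\aq&p-q\end{smallmatrix}\right)$ for some $p,q\in comm(a)$.

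For the implication $a\in\mathcal{A}^d\Rightarrow M\in M_2(\mathcal{A})^d$ I would apply Theorem 2.2 directly. Put $q=a^d$ and $p=1-aq$; then $p,q\in comm(a)$, so $X=\left(\begin{smallmatrix}1-aq&q\\aq&1-aq-q\end{smallmatrix}\right)\in comm(M)$. The point of choosing $p=1-aq$ is that it annihilates the off-diagonal entries of $M-M^2X$: a short computation using $M^2=M+\mathrm{diag}(a,a)$ collapses $M-M^2X$ to the diagonal matrix $-\mathrm{diag}(a-a^2q,\,a-a^2q)$. Since $a-a^2a^d\in\mathcal{A}^{qnil}$ by the very definition of the g-Drazin inverse, this diagonal matrix has spectrum $\{0\}$ and hence lies in $M_2(\mathcal{A})^{qnil}$. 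Theorem 2.2 then yields $M\in M_2(\mathcal{A})^d$. This half is clean and showcases the new criterion.

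For the converse the most reliable route is spectral. I would compute the spectrum of $M$ exactly by a Schur-complement (resolvent) argument: for $\lambda\neq 0$ the block $M-\lambda$ has invertible $(2,2)$-corner $-\lambda$, and its Schur complement $(1-\lambda)+\lambda^{-1}a$ is invertible precisely when $\lambda^2-\lambda\notin\sigma(a)$; combined with $0\in\sigma(M)\Leftrightarrow 0\in\sigma(a)$ this gives the exact description
\[
\sigma(M)=\{\lambda\in\mathbb{C}:\lambda^2-\lambda\in\sigma(a)\}=g^{-1}(\sigma(a)),\qquad g(\lambda)=\lambda^2-\lambda.
\]
Because $g$ is a biholomorphism near $\lambda=0$ (indeed $g(0)=0$ and $g'(0)=-1\neq0$), the portion of $\sigma(M)$ near $0$ is a homeomorphic copy of the portion of $\sigma(a)$ near $0$; hence $0$ is isolated in, or absent from, $\sigma(M)$ if and only if the same holds for $\sigma(a)$. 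By Koliha's spectral characterization of the g-Drazin inverse through isolation of $0$ in the spectrum, $M\in M_2(\mathcal{A})^d\Leftrightarrow a\in\mathcal{A}^d$, which in fact re-proves the first implication as well.

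I expect the converse to be the main obstacle. The tempting shortcut, namely reading $M^2-M=\mathrm{diag}(a,a)$ as a reduction to the diagonal element $\mathrm{diag}(a,a)$, fails: $M^2-M=M(M-I)$ need \emph{not} be g-Drazin invertible when $0$ is a merely isolated point of $\sigma(a)$, because the factor $M-I$ is then singular on the spectral part of $M$ lying near $1$. This is precisely why the \emph{exact} spectrum, and not just a polynomial relation, is needed, and why one must separate the behaviour of $\sigma(M)$ near $\lambda=0$ from that near $\lambda=1$. A fully algebraic converse would instead analyse the idempotent $E=\left(\begin{smallmatrix}p&q\\aq&p-q\end{smallmatrix}\right)\in comm(M)$ furnished by Theorem 2.7 and try to manufacture from it an idempotent in $comm(a)$; but the noncommutativity of the entries $p,q$ makes the relations $p^2+aq^2=p$ and $pq+qp-q^2=q$ awkward to exploit, so the spectral argument is the cleaner path.
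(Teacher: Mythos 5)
Your proposal proves the wrong statement. Lemma 3.1 does not assert the equivalence $M\in M_2(\mathcal{A})^d\Leftrightarrow a\in\mathcal{A}^d$; that is the content of Lemma 3.2. What Lemma 3.1 actually claims is the explicit combinatorial formula for the powers of $M$, namely
\[
M^n=\left(\begin{array}{cc} U(n) & U(n-1)\\ U(n-1)a & U(n-2)a\end{array}\right),
\qquad U(m)=\sum_{i=0}^{[m/2]}\binom{m-i}{i}a^i,\quad U(-1)=0,
\]
together with the Fibonacci-type recurrence $U(n)-U(n-1)=U(n-2)a$. This is established by a routine induction on $n$ using $M^{n+1}=M\cdot M^n$ (the paper simply cites Castro-Gonz\'alez and Dopazo, Proposition 3.7), and these polynomials $U(n)$ are precisely what the paper later feeds into the norm estimates in its proof of Lemma 3.2. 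Nothing in your proposal establishes, or even mentions, this formula, so as a proof of Lemma 3.1 it does not address the claim at all.

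That said, what you wrote is a correct proof of Lemma 3.2, and by a genuinely different and shorter route than the paper's. Your forward direction, taking $X=\left(\begin{smallmatrix}1-aa^d & a^d\\ aa^d & 1-aa^d-a^d\end{smallmatrix}\right)\in comm(M)$ and computing $M-M^2X=-\,\mathrm{diag}(a-a^2a^d,\,a-a^2a^d)\in M_2(\mathcal{A})^{qnil}$, is a clean application of Theorem 2.2; your converse via the exact spectral identity $\sigma(M)=\{\lambda:\lambda^2-\lambda\in\sigma(a)\}$, the local invertibility of $g(\lambda)=\lambda^2-\lambda$ at $0$, and Koliha's isolated-point criterion avoids the paper's lengthy iteration with the $U(n)$'s and the estimate $\|v_n\|\le(1+\|a\|)^n(\|v_0\|+\|v_1\|)$. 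Your caution about the shortcut through $M^2-M=\mathrm{diag}(a,a)$ is also well placed. But all of this answers a different question from the one posed; to prove Lemma 3.1 you would need the induction on $M^n$ and the verification of the recurrence for $U$.
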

 \begin{enumerate}
\item [(1)]{\it For any $n\in {\Bbb N}$, $M^n=\left(
         \begin{array}{cc}
          U(n) & U(n-1) \\
          U(n-1)a & U(n-2)b \\
          \end{array}
         \right)$,
where $U(m) =\sum\limits_{i=0}^{[\frac{m}{2}]}
\left(
\begin{array}{c}
m-i\\
i
\end{array}
\right)b^i, m\geq 0; U(-1)=0.$} \vspace{-.5mm}
\item [(2)]{\it $U(n)-U(n-1)=U(n-2)a$ for any $n\in {\Bbb N}$.}
\end{enumerate}
\begin{proof} See~\cite[Proposition 3.7]{CD}.\end{proof}

\begin{lem} Let $a\in \mathcal{A}$. Then the following are equivalent:\end{lem}
\begin{enumerate}
\item [(1)]{\it $a\in \mathcal{A}^d$.} \vspace{-.5mm}
\item [(2)]{\it $\left(
\begin{array}{cc}
1&1\\
a&0
\end{array}
\right)\in M_2(\mathcal{A})^d.$}
\end{enumerate}
\begin{proof} $(1)\Rightarrow (2)$ This is obvious by\cite[Theorem 2.3]{ZM}.

$(2)\Rightarrow (1)$ Write $M^d=\left(
\begin{array}{cc}
x_{11}&x_{12}\\
x_{21}&x_{22}
\end{array}
\right)$. Then $MM^d=M^dM$, and so
$$\left(
\begin{array}{cc}
1&1\\
a&0
\end{array}
\right)\left(
\begin{array}{cc}
x_{11}&x_{12}\\
x_{21}&x_{22}
\end{array}
\right)=\left(
\begin{array}{cc}
x_{11}&x_{12}\\
x_{21}&x_{22}
\end{array}
\right)\left(
\begin{array}{cc}
1&1\\
a&0
\end{array}
\right).$$ Then $$\left(
\begin{array}{cc}
x_{11}+x_{12}a&x_{12}+x_{22}\\
ax_{11}&ax_{12}
\end{array}
\right)=\left(
\begin{array}{cc}
x_{11}+x_{12}a&x_{11}\\
x_{21}+x_{22}a&x_{21}
\end{array}
\right).$$ Hence, we have $$\begin{array}{c}
x_{11}+x_{21}=x_{11}+x_{12}a,\\
ax_{12}=x_{21}.
\end{array}$$ Therefore $ax_{12}=x_{21}=x_{12}a.$

Write $(M^2M^d-M)^n=W_n=\left(
\begin{array}{cc}
\alpha_n&\beta_n\\
\gamma_n&\delta_n
\end{array}
\right) (n\in {\Bbb N})$. Since $M^{n+1}M^d-M^n=W_n$, we see that $${\lim_{n\to \infty}}\parallel W_n\parallel^{\frac{1}{n}}=0,$$ and then
$${\lim_{n\to \infty}}\parallel \left(
\begin{array}{cc}
0&\beta_n\\
0&0
\end{array}
\right)\parallel^{\frac{1}{n}}={\lim_{n\to \infty}}\parallel \left(
\begin{array}{cc}
1&0\\
0&0
\end{array}
\right)W_n\left(
\begin{array}{cc}
0&0\\
0&1
\end{array}
\right)\parallel^{\frac{1}{n}}=0.$$ This implies that $${\lim_{n\to \infty}}\parallel \beta_n\parallel^{\frac{1}{n}}=0.$$
Likewise, $${\lim_{n\to \infty}}\parallel \delta_n\parallel^{\frac{1}{n}}=0.$$
By using Lemma 2.1, we have $$\begin{array}{c}
U(n+1)x_{12}+U(n)x_{22}=U(n-1)+v_0, v_0:=\beta_n,\\
U(n)ax_{12}+U(n-1)ax_{22}=U(n-2)a+v_1, v_1:=\delta_n.
\end{array}$$
Hence, $$U(n+1)ax_{12}+U(n)ax_{22}=U(n-1)a+a\beta_n,$$ and then
$$\begin{array}{c}
U(n-1)a^2x_{12}+U(n-2)a^2x_{22}=U(n-3)a^2+v_2, v_2:==av_0-v_1.
\end{array}$$
Moreover, we have $$\begin{array}{c}
U(n-2)a^3x_{12}+U(n-3)a^3x_{22}=U(n-4)a^3+v_3, v_3:=av_1-v_2.
\end{array}$$
By iteration of this process, we have $$\begin{array}{ll}
U(n-(n-2))a^{n-1}x_{12}+U(n-(n-1))a^{n-1}x_{22}\\
=U(n-n)a^{n-1}+v_{n-1};\\
v_{n-1}:=av_{n-3}-v_{n-2},\\
U(n-(n-1))a^{n}x_{12}+U(n-n)a^{n}x_{22}=U(n-(n+1))a^{n}+v_{n},\\
v_{n}:=av_{n-2}-v_{n-1}.
\end{array}$$  That is,
$$\begin{array}{c}
(1+a)a^{n-1}x_{12}+a^{n-1}x_{22}=a^{n-1}+v_{n-1}, v_{n-1}:=av_{n-3}-v_{n-2};\\
a^{n}x_{12}+a^{n}x_{22}=v_{n}, v_{n}:=av_{n-2}-v_{n-1}.
\end{array}$$
Therefore $$\begin{array}{lll}
a^n&=&a[(1+a)a^{n-1}x_{12}+a^{n-1}x_{22}-v_{n-1}]\\
&=&(1+a)a^{n}x_{12}+a^{n}x_{22}-av_{n-1}\\
&=&(1+a)a^{n}x_{12}+v_n-a^{n}x_{12}-av_{n-1}\\
&=&a^{n+1}x_{12}+v_n-av_{n-1}.
\end{array}$$ Hence,
$$a^n-a^{n+1}x_{12}=v_n-av_{n-1}.$$
We have a recurrence relations $$v_0=\beta_n, v_1=\delta_n, v_{n}=-v_{n-1}+av_{n-2}.$$
By induction, we show that $$\parallel v_n\parallel\leq (1+\parallel a\parallel)^n (\parallel v_0\parallel+\parallel v_1\parallel).$$ Hence we have
$$\parallel v_n-av_{n-1}\parallel\leq (1+\parallel a\parallel)^{n+1}(\parallel \beta_n\parallel+\parallel \delta_n\parallel).$$
Then we get $$\parallel v_n-av_{n-1}\parallel^{\frac{1}{n}}\leq (1+\parallel a\parallel)^{1+\frac{1}{n}}(\parallel \beta_n\parallel^{\frac{1}{n}}
+\parallel \delta_n\parallel^{\frac{1}{n}}).$$ Thus, $${\lim_{n\to \infty}}\parallel a^n-a^{n+1}x_{12}\parallel^{\frac{1}{n}}=0.$$
Since $\parallel (a-a^{2}x_{12})^n\parallel\leq \parallel a^n-a^{n+1}x_{12}\parallel\parallel 1-ax_{12}\parallel^{n-1},$ we deduce that
$${\lim_{n\to \infty}}\parallel (a-a^{2}x_{12})^n\parallel^{\frac{1}{n}}=0.$$ Therefore
$a-a^2x_{12}\in \mathcal{A}^{qnil}$. In light of Theorem 2.2, $a\in \mathcal{A}^d$, as asserted.\end{proof}

\vskip4mm We are ready to extend~\cite[Theorem 2.6]{ZMC} for the g-Drazin inverse.

\begin{thm} Let $M=\left(
\begin{array}{cc}
a&b\\
c&0
\end{array}
\right)\in M_2(\mathcal{A})$. If $a^2=a\in \mathcal{A}$ and $ab=b$, then the following are equivalent:
\end{thm}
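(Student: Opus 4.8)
The plan is to prove that condition (1), $M\in M_2(\mathcal A)^d$, is equivalent to the g-Drazin invertibility of the generalized Schur complement, i.e.\ to $bc\in\mathcal A^d$ (equivalently $cb\in\mathcal A^d$, the two being interchangeable by Cline's formula). The organizing observation is that the hypotheses $a^2=a$ and $ab=b$ make $e=\left(\begin{smallmatrix} a & 0\\ 0 & 1\end{smallmatrix}\right)$ an idempotent with $eM=M$. First I would record this identity and apply Cline's formula to the factorization $M=e\cdot M$: since the reverse product is $Me$, this gives $M\in M_2(\mathcal A)^d$ if and only if $Me=\left(\begin{smallmatrix} a & b\\ ca & 0\end{smallmatrix}\right)\in M_2(\mathcal A)^d$. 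Because $e(Me)=(Me)e=Me$, the element $Me$ lies in the corner algebra $\mathcal B=eM_2(\mathcal A)e$, and since the spectral idempotent of $Me$ commutes with $e$ one checks that $Me\in M_2(\mathcal A)^d$ if and only if $Me\in\mathcal B^d$. This confines the entire problem to $\mathcal B$, where the $(1,1)$ corner identity is $a$ itself.

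Inside $\mathcal B$ the element $a$ is a unit of the upper corner $a\mathcal Aa$, so a block-LU/Schur reduction becomes available; I would use the factorization
\[ Me=\begin{pmatrix} a & 0\\ ca & -cb\end{pmatrix}\begin{pmatrix} a & b\\ 0 & 1\end{pmatrix}, \]
whose right factor is a unit of $\mathcal B$ and whose left factor is block lower-triangular with diagonal corners $a$ (a unit) and $-cb$. This makes transparent that $cb$ (equivalently $bc$) is the invariant that should govern everything, and by the standard block-triangular description of the g-Drazin inverse the triangular factor is g-Drazin invertible in $\mathcal B$ exactly when $cb\in\mathcal A^d$. To turn this heuristic into a proof I would run the two implications through Section~2: for $(2)\Rightarrow(1)$, start from $s=(bc)^d$, use $a\cdot bc=bc$ together with $ab=b$ to assemble a matrix $X\in comm(M)$ out of $a,b,c,s$, and verify via Theorem~2.2 that $M-M^2X\in M_2(\mathcal A)^{qnil}$ (or, equivalently, build the spectral idempotent of $M$ from $1-bc\,s$ and invoke Theorem~2.7); for $(1)\Rightarrow(2)$, extract $bc\in\mathcal A^d$ from $Me\in\mathcal B^d$ by the same Schur/rank factorization read in reverse.

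The main obstacle is precisely the asymmetry of the hypotheses: we are given $ab=b$ but no dual relation for $c$ (nothing like $ca=c$), so the passage from the \emph{product} $Me=DU$ to the triangular factor $D$ uses right multiplication by the unit $U$, which is merely an \emph{equivalence} and not a \emph{similarity}; equivalence does not preserve g-Drazin invertibility, so this step is not free. Bridging the gap is the crux, and I expect to close it in one of two ways: either symmetrize through Cline's formula, trading $bc$ for $cb$ so that the surviving off-diagonal block is absorbed into the triangular factor, or fall back on a direct quasinilpotency estimate for $M-M^2X$ in the spirit of the recurrence already used in the proof of Lemma~3.2, bounding $\lVert(M-M^2X)^n\rVert^{1/n}$ by the block quantities $\lVert\beta_n\rVert^{1/n}$ and $\lVert\delta_n\rVert^{1/n}$. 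Once the asymmetry is handled, the verifications that $X$ commutes with $M$ and annihilates $M$ modulo a quasinilpotent are routine, so the conceptual weight of the argument rests entirely on this one reduction.
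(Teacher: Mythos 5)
Your proposal correctly isolates $bc$ (equivalently $cb$) as the governing invariant, and the factorization $Me=\left(\begin{smallmatrix} a & 0\\ ca & -cb\end{smallmatrix}\right)\left(\begin{smallmatrix} a & b\\ 0 & 1\end{smallmatrix}\right)$ inside the corner algebra $\mathcal B=eM_2(\mathcal A)e$ is a genuinely nice observation (it is valid because $cab=cb$, and the right factor is indeed a unit of $\mathcal B$ with inverse $\left(\begin{smallmatrix} a & -b\\ 0 & 1\end{smallmatrix}\right)$). But as you yourself concede, this is where the proof stops being a proof. The passage from $DU$ to $D$ is an equivalence, not a similarity, and neither of your two proposed repairs is carried out: Cline's formula applied to $DU$ gives $UD=U(DU)U^{-1}$, which is \emph{similar} to $DU$, so symmetrizing through Cline buys you nothing here; and the fallback of exhibiting $X\in comm(M)$ with $M-M^2X$ quasinilpotent is never instantiated --- no candidate $X$ is written down, and finding one is not routine precisely because the hypotheses give $abc=bc$ but \emph{not} $bca=bc$, so $bc$ and $a$ need not commute and the obvious Schur-type formulas for $X$ fail to commute with $M$. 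The "conceptual weight" you place on this one reduction is exactly the part that is missing, so the argument as it stands has a genuine gap in both directions of the equivalence.

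For comparison, the paper closes this gap without ever producing a commutant element or a spectral idempotent for $M$. It shuttles between $M$ and the normalized matrix $\left(\begin{smallmatrix}1&1\\ bc&0\end{smallmatrix}\right)$ by chaining four applications of Cline's formula (peeling off $\mathrm{diag}(1,c)$ and $\mathrm{diag}(1,bc)$ as one factor of a reversed product), one application of Koliha's commuting-product theorem (to multiply by $\mathrm{diag}(a,a)$), and two applications of the Djordjevi\'c--Wei additive theorem for splittings with $PQ=0$ (to absorb the corrections $\left(\begin{smallmatrix}1-a&0\\0&0\end{smallmatrix}\right)$ and $\left(\begin{smallmatrix}0&0\\ bc(1-a)&0\end{smallmatrix}\right)$ created by the idempotent $a$). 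The final link to $bc\in\mathcal A^d$ is Lemma~3.2, whose hard direction is itself proved by the norm-estimate recurrence you allude to --- but that estimate is performed once, for the specific matrix $\left(\begin{smallmatrix}1&1\\ a&0\end{smallmatrix}\right)$, not for a general $M-M^2X$. If you want to salvage your outline, the most economical route is to prove (or invoke) that chain of reductions rather than to hunt for an explicit $X$.
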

\begin{enumerate}
\item [(1)]{\it $M\in M_2(\mathcal{A})^d$.} \vspace{-.5mm}
\item [(2)]{\it $bc\in \mathcal{A}^d.$}\\
\end{enumerate}
\begin{proof} $(1)\Rightarrow (2)$ One easily checks that
$$\begin{array}{c}
\left(
\begin{array}{cc}
a&b\\
c&0
\end{array}
\right)=\left(
\begin{array}{cc}
1&0\\
0&c
\end{array}
\right)\left(
\begin{array}{cc}
a&b\\
1&0
\end{array}
\right),\\
\left(
\begin{array}{cc}
a&bc\\
1&0
\end{array}
\right)=\left(
\begin{array}{cc}
a&b\\
1&0
\end{array}
\right)\left(
\begin{array}{cc}
1&0\\
0&c
\end{array}
\right).
\end{array}$$ By using Cline's formula, $\left(
\begin{array}{cc}
a&bc\\
1&0
\end{array}
\right)$ has g-Drazin inverse.
Moreover, we have
$$\begin{array}{c}
\left(
\begin{array}{cc}
a&bc\\
1&0
\end{array}
\right)=\left(\begin{array}{cc}
a&a\\
1&0
\end{array}
\right)\left(\begin{array}{cc}
1&0\\
0&bc
\end{array}
\right),\\
\left(
\begin{array}{cc}
a&a\\
bc&0
\end{array}
\right)=\left(\begin{array}{cc}
1&0\\
0&bc
\end{array}
\right)\left(
\begin{array}{cc}
a&a\\
1&0
\end{array}
\right).
\end{array}$$ By using Cline's formula again, $\left(
\begin{array}{cc}
a&a\\
bc&0
\end{array}
\right)$ has g-Drazin inverse.
Since $$\left(
\begin{array}{cc}
1&a\\
bc&0
\end{array}
\right)=\left(
\begin{array}{cc}
1-a&0\\
0&0
\end{array}
\right)+\left(
\begin{array}{cc}
a&a\\
bc&0
\end{array}
\right),$$ it follows by~\cite[Theorem 2.2]{DW} that
$\left(
\begin{array}{cc}
1&a\\
bc&0
\end{array}
\right)$ has g-Drazin inverse.
Let $S=\left(
\begin{array}{cc}
1&1\\
bc&0
\end{array}
\right), T=\left(
\begin{array}{cc}
1&0\\
0&a
\end{array}
\right).$ Then
$$ST=\left(
\begin{array}{cc}
1&a\\
bc&0
\end{array}
\right), TS=\left(
\begin{array}{cc}
1&1\\
bc&0
\end{array}
\right).$$ In view of Cline's formula, $\left(
\begin{array}{cc}
1&1\\
bc&0
\end{array}
\right)$ has g-Drazin inverse.
In light of Lemma 3.2, $bc\in \mathcal{A}^d$, as asserted.

$(2)\Rightarrow (1)$ Since $bc=abc\in \mathcal{A}^d$, it follows by Cline's formula that $bca$ has g-Drazin inverse. In light of Lemma 3.2,
$\left(
\begin{array}{cc}
1&1\\
bca&0
\end{array}
\right)$ has g-Drazin inverse. As $$\left(
\begin{array}{cc}
1&1\\
bca&0
\end{array}
\right)\left(
\begin{array}{cc}
a&0\\
0&a
\end{array}
\right)=\left(
\begin{array}{cc}
a&0\\
0&a
\end{array}
\right)\left(
\begin{array}{cc}
1&1\\
bca&0
\end{array}
\right),$$ it follows by~\cite[Theorem 5.5]{K} that $\left(
\begin{array}{cc}
a&a\\
bca&0
\end{array}
\right)$ has g-Drazin inverse.
Since $$\left(
\begin{array}{cc}
a&a\\
bc&0
\end{array}
\right)=\left(
\begin{array}{cc}
0&0\\
bc(1-a)&0
\end{array}
\right)+\left(
\begin{array}{cc}
a&a\\
bca&0
\end{array}
\right),$$ it follows by~\cite[Theorem 2.2]{DW} that $\left(
\begin{array}{cc}
a&a\\
bc&0
\end{array}
\right)$ has g-Drazin inverse. We easily check that
$$\begin{array}{c}
\left(
\begin{array}{cc}
a&a\\
bc&0
\end{array}
\right)=\left(
\begin{array}{cc}
1&0\\
0&bc
\end{array}
\right)\left(
\begin{array}{cc}
a&a\\
1&0
\end{array}
\right),\\
\left(
\begin{array}{cc}
a&bc\\
1&0
\end{array}
\right)=\left(
\begin{array}{cc}
a&a\\
1&0
\end{array}
\right)\left(
\begin{array}{cc}
1&0\\
0&bc
\end{array}
\right).
\end{array}$$ In view of Cline's formula,
$\left(
\begin{array}{cc}
a&bc\\
1&0
\end{array}
\right)$ has g-Drazin inverse.
Furthermore, we have
$$\begin{array}{c}
\left(
\begin{array}{cc}
a&b\\
c&0
\end{array}
\right)=\left(
\begin{array}{cc}
1&0\\
0&c
\end{array}
\right)\left(
\begin{array}{cc}
a&b\\
1&0
\end{array}
\right),\\
\left(
\begin{array}{cc}
a&bc\\
1&0
\end{array}
\right)=\left(
\begin{array}{cc}
a&b\\
1&0
\end{array}
\right)\left(
\begin{array}{cc}
1&0\\
0&c
\end{array}
\right).
\end{array}$$
By using Cline's formula again, we conclude that $M$ gas g-Drazin inverse.\end{proof}

\begin{cor} Let $M=\left(
\begin{array}{cc}
a&a\\
b&0
\end{array}
\right)\in M_2(\mathcal{A})$. If $a^2=a\in \mathcal{A}$, then the following are equivalent:
\end{cor}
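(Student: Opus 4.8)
The plan is to obtain this corollary as the immediate specialization of Theorem 3.3 in which the top-right block of $M$ is taken to be $a$. In the notation of Theorem 3.3 the off-diagonal blocks of $M$ are $a$ (upper right) and $b$ (lower left). Theorem 3.3 carries two standing hypotheses: that the $(1,1)$-entry is idempotent, and that this entry fixes the upper-right block. The first is precisely our assumption $a^2=a$; the second, which in the theorem reads $(\text{top-left})(\text{top-right})=(\text{top-right})$, here becomes $a\cdot a=a$ and so is again just the idempotency of $a$.

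First I would record that both hypotheses of Theorem 3.3 therefore hold automatically: the only one not literally among our assumptions, namely that $a$ fixes the upper-right block, collapses to $a^2=a$ and is thus free. Having verified applicability, I would invoke Theorem 3.3 directly, concluding that $M$ has g-Drazin inverse if and only if the product (upper-right)(lower-left) $=a\cdot b=ab$ lies in $\mathcal{A}^d$. This gives the desired equivalence: $M\in M_2(\mathcal{A})^d$ if and only if $ab\in\mathcal{A}^d$.

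I do not anticipate any genuine difficulty, since the result is a one-line consequence of Theorem 3.3 and requires none of the block decompositions, applications of Cline's formula, or quasinilpotent norm estimates developed earlier in the section. The single point worth stating explicitly is that the side hypothesis of Theorem 3.3 requiring the $(1,1)$-entry to fix the upper-right block is vacuous here, reducing to $a^2=a$ once that block is taken equal to $a$.
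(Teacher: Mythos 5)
Your proposal is correct and matches the paper exactly: the paper also derives this corollary as an immediate specialization of Theorem 3.3, taking the upper-right block equal to $a$ so that the hypothesis $ab=b$ of that theorem reduces to $a^2=a$ and the conclusion reads $ab\in\mathcal{A}^d$. No further argument is needed.
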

\begin{enumerate}
\item [(1)]{\it $M\in M_2(\mathcal{A})^d$.} \vspace{-.5mm}
\item [(2)]{\it $ab\in \mathcal{A}^d.$}\\
\end{enumerate}
\begin{proof} This is obvious by Theorem 3.3.\end{proof}

\begin{lem} Let $M=\left(
\begin{array}{cc}
a&b\\
c&0
\end{array}
\right)\in M_2(\mathcal{A})$. If $a\in \mathcal{A}^d, caa^d=c$ and $a^dbc=bca^d$, then the following are equivalent:
\end{lem}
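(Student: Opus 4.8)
The plan is to reduce the general g-Drazin hypothesis on $a$ to the idempotent situation already settled in Theorem 3.3 and Corollary 3.4, by isolating the quasinilpotent part of $a$ through its spectral idempotent $e=aa^d$. Taking condition (2) to be $bc\in\mathcal{A}^d$, as in Theorem 3.3, I would first write $a=a_1+a_2$ with $a_1=a^2a^d=ae$ and $a_2=a-a^2a^d=a(1-e)$, so that $a_1$ is invertible in the corner $e\mathcal{A}e$, $a_2\in[(1-e)\mathcal{A}(1-e)]^{qnil}$, and $ea=ae$. The two hypotheses translate cleanly into this grading: $caa^d=c$ says exactly $ce=c$, i.e.\ $c$ is carried by the invertible part of $a$, while $a^dbc=bca^d$ records that $bc$ commutes with $a^d$, hence with $e$. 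These are the structural facts I will use repeatedly.

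Next I would split the matrix as $M=P+Q$ with $P=\begin{pmatrix}a^2a^d & b\\ c & 0\end{pmatrix}$ and $Q=\begin{pmatrix}a-a^2a^d & 0\\ 0 & 0\end{pmatrix}$. The summand $Q$ is quasinilpotent because $a-a^2a^d$ is, and a short computation shows $PQ=0$, the key cancellations being $a^2a^d(a-a^2a^d)=0$ and $c(a-a^2a^d)=c\,e\,a(1-e)=0$, both immediate from $ea=ae$ and $ce=c$. By the additive result \cite[Theorem 2.2]{DW} applied to the g-Drazin element $P$ and the quasinilpotent $Q$ with $PQ=0$, g-Drazin invertibility transfers from $P$ to $M$, and I would arrange the companion factorisations so that the same additive/Cline machinery carries information back from $M$ to $P$. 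This reduces the problem to deciding when $P$, whose $(1,1)$ entry $a^2a^d$ is invertible in $e\mathcal{A}e$, lies in $M_2(\mathcal{A})^d$.

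For $P$ I would run the Cline's-formula chain exactly as in Theorem 3.3: factorisations such as $\begin{pmatrix}a^2a^d & b\\ c & 0\end{pmatrix}=\begin{pmatrix}1 & 0\\ 0 & c\end{pmatrix}\begin{pmatrix}a^2a^d & b\\ 1 & 0\end{pmatrix}$ move the lower-left entry into the product $bc$ in the upper-right (passing to $\begin{pmatrix}a^2a^d & bc\\ 1 & 0\end{pmatrix}$), after which the commutation $a^dbc=bca^d$, equivalently $e\,bc=bc\,e$, guarantees that the intermediate anti-triangular matrices have the block form to which Theorem 3.3 and Corollary 3.4 apply, with the commuting-product theorem \cite[Theorem 5.5]{K} absorbing any commuting g-Drazin factor along the way. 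The chain terminates, as in Lemma 3.2, at $\begin{pmatrix}1 & 1\\ bc & 0\end{pmatrix}$, whose g-Drazin invertibility is equivalent to $bc\in\mathcal{A}^d$. Since Cline's formula is bidirectional, both implications follow from one such chain.

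The hard part will be the direction $M\in M_2(\mathcal{A})^d\Rightarrow bc\in\mathcal{A}^d$, because $Q$ does not commute with $M$: one finds $QP=\begin{pmatrix}0 & a(1-e)b\\ 0 & 0\end{pmatrix}$, which is nonzero in general, so I cannot delete $Q$ by a commuting-perturbation argument. The way around this is that $QP$ is nilpotent of index two, since $(QP)^2=Q(PQ)P=0$, and $PQ=0$, so the only additive steps I actually need are the forward ones furnished by \cite[Theorem 2.2]{DW} together with bidirectional Cline reductions; the two hypotheses $caa^d=c$ and $a^dbc=bca^d$ are precisely what force the relevant products and commutators to vanish at each step. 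Verifying those vanishings carefully, and checking that the corner element $a^2a^d$ plays throughout the role the idempotent $a$ played in Theorem 3.3, is where the bulk of the routine but delicate computation lies.
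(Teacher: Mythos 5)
Your decomposition $M=P+Q$ with $P=\left(\begin{smallmatrix}a^2a^d&b\\ c&0\end{smallmatrix}\right)$ and $Q=\left(\begin{smallmatrix}aa^{\pi}&0\\ 0&0\end{smallmatrix}\right)$ does satisfy $PQ=0$ (your cancellations $a^2a^d\cdot aa^{\pi}=0$ and $caa^{\pi}=0$ are correct), but it only carries you in one direction. The additive theorem of Djordjevi\'c--Wei transfers g-Drazin invertibility \emph{from the two summands to the sum}; to run $(1)\Rightarrow(2)$ you must pass from $M$ to $P=M+(-Q)$, which would require $M(-Q)=0$ or $(-Q)M=0$. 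Neither holds: $MQ=Q^2=\mathrm{diag}\big((aa^{\pi})^2,0\big)$ and $QM$ has the entries $a^2a^{\pi}$ and $aa^{\pi}b$, all nonzero in general. Your observation that $QP$ is square-zero does not plug into any cited result, so the converse direction is a genuine gap. Contrast this with Theorem 3.6 of the paper, where the perturbation $ca^{\pi}$ is placed in the lower-left corner precisely so that \emph{both} $PQ=0$ and $MN=0$ (with $N=-Q$) hold, making that splitting reversible; yours is not.

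There is a second gap in the reduction of $P$ to Theorem 3.3: the corner entry $a^2a^d$ is not an idempotent (indeed $(a^2a^d)^2=a^3a^d$), and Theorem 3.3 also requires $ab=b$, for which you have no analogue. Concretely, the step in your Cline chain that should pass from $\left(\begin{smallmatrix}a^2a^d&bc\\ 1&0\end{smallmatrix}\right)$ to $\left(\begin{smallmatrix}a^2a^d&a^2a^d\\ 1&0\end{smallmatrix}\right)\left(\begin{smallmatrix}1&0\\ 0&bc\end{smallmatrix}\right)$ needs $a^2a^d\,bc=bc$, which is not among your hypotheses. The paper avoids both problems by never splitting off the quasinilpotent part of $a$: for $(2)\Rightarrow(1)$ it uses $caa^d=c$ to factor $M=\left(\begin{smallmatrix}1&1\\ ca^d&0\end{smallmatrix}\right)\mathrm{diag}(a,b)$ and applies Cline's formula, reducing to the commuting product $\mathrm{diag}(a,a)\cdot\left(\begin{smallmatrix}1&1\\ (a^d)^2bc&0\end{smallmatrix}\right)$, which is handled by Koliha's theorem on commuting products together with Lemma 3.2; for $(1)\Rightarrow(2)$ it multiplies by the commuting g-Drazin element $\mathrm{diag}(a^d,a^d)$ and peels off the $a^{\pi}$ part only at the level of the already-reduced matrix $\left(\begin{smallmatrix}1&1\\ (a^d)^2bc&0\end{smallmatrix}\right)$, where the perturbation $\left(\begin{smallmatrix}a^{\pi}&a^{\pi}\\ 0&0\end{smallmatrix}\right)$ does annihilate correctly, and then invokes Lemma 3.2 and $bc=(a^d)^2bca^2$. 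You would need to replace your additive step by some such multiplicative (Cline/commuting-product) argument for the converse direction to close.
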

\begin{enumerate}
\item [(1)]{\it $M\in M_2(\mathcal{A})^d$.} \vspace{-.5mm}
\item [(2)]{\it $bc\in \mathcal{A}^d.$}\\
\end{enumerate}
\begin{proof} $2)\Rightarrow (1)$ Since $a^dbc=bca^d$, it follows by~\cite[Theorem 5.5]{K} that $(a^d)^2bc\in \mathcal{A}^d$.
In view of Lemma 3.2,
$$\left(
\begin{array}{cc}
1&1\\
(a^d)^2bc&0
\end{array}
\right)\in M_2(\mathcal{A})^d.$$
We easily check that
$$\left(
\begin{array}{cc}
a&0\\
0&a
\end{array}
\right)\left(
\begin{array}{cc}
1&1\\
(a^d)^2bc&0
\end{array}
\right)=\left(
\begin{array}{cc}
1&1\\
(a^d)^2bc&0
\end{array}
\right)\left(
\begin{array}{cc}
a&0\\
0&a
\end{array}
\right),$$ we see that $$\left(
\begin{array}{cc}
a&0\\
0&a
\end{array}
\right)\left(
\begin{array}{cc}
1&1\\
(a^d)^2bc&0
\end{array}
\right)\in M_2(\mathcal{A})^d.$$
This shows that
$$\left(
\begin{array}{cc}
a&0\\
0&b
\end{array}
\right)\left(
\begin{array}{cc}
1&1\\
ca^d&0
\end{array}
\right)\in M_2(\mathcal{A})^d.$$
By using Cline's formula,
$$M=\left(
\begin{array}{cc}
1&1\\
ca^d&0
\end{array}
\right)\left(
\begin{array}{cc}
a&0\\
0&b
\end{array}
\right)\in M_2(\mathcal{A})^d.$$

$(1)\Rightarrow (2)$ Since $M$ has g-Drazin inverse, we prove that
$$\left(
\begin{array}{cc}
a&1\\
bc&0
\end{array}
\right)\in M_2(\mathcal{A})^d.$$ Since $a^d(bc)=(bc)a^d$, by virtue of ~\cite[Theorem 3.1]{ZMC}, we have
$$\left(
\begin{array}{cc}
a^da&a^d\\
a^dbc&0
\end{array}
\right)=\left(
\begin{array}{cc}
a^d&0\\
0&a^d
\end{array}
\right)\left(
\begin{array}{cc}
a&1\\
bc&0
\end{array}
\right)\in M_2(\mathcal{A})^d.$$ By using Cline's formula,
$$\left(
\begin{array}{cc}
a^da&aa^d\\
(a^d)^2bc&0
\end{array}
\right)=\left(
\begin{array}{cc}
1&0\\
0&a^d
\end{array}
\right)\left(
\begin{array}{cc}
a^da&a^d\\
a^dbc&0
\end{array}
\right)\left(
\begin{array}{cc}
1&0\\
0&a
\end{array}
\right)\in M_2(\mathcal{A})^d.$$
One easily checks that
$$\left(
\begin{array}{cc}
1&1\\
(a^d)^2bc&0
\end{array}
\right)=\left(
\begin{array}{cc}
a^{\pi}&a^{\pi}\\
0&0
\end{array}
\right)+\left(
\begin{array}{cc}
aa^d&aa^d\\
(a^d)^2bc&0
\end{array}
\right).$$ Hence, $$\left(
\begin{array}{cc}
1&1\\
(a^d)^2bc&0
\end{array}
\right)\in M_2(\mathcal{A})^d.$$ In light of Lemma 2.2,
$(a^d)^2bc\in \mathcal{A}^d$. Since $a(a^d)^2bc=(a^d)^2bca$, we see that $a^2(a^d)^2bc=(a^d)^2bca^2$. In view of~\cite[Theorem 3.1]{ZMC},
$$bc=bc(a^d)^2a^2=(a^d)^2bca^2\in \mathcal{A}^d,$$ as asserted.\end{proof}

The following result is a generalization of~\cite[Theorem 4.1]{CD} for the g-Drazin inverse.

\begin{thm} Let $M=\left(
\begin{array}{cc}
a&b\\
c&0
\end{array}
\right)\in M_2(\mathcal{A})$. If $a\in \mathcal{A}^d, bca^{\pi}=0$ and $a^dbc=bca^d$, then the following are equivalent:
\end{thm}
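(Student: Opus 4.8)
The plan is to follow the proof of Lemma 3.5 closely: the present hypotheses differ from those of Lemma 3.5 only in that $caa^d=c$ is weakened to $bca^\pi=0$, and I expect the asserted equivalence to be between (1) $M\in M_2(\mathcal{A})^d$ and (2) $bc\in\mathcal{A}^d$. The unifying observation is that both hypotheses yield $bc\,aa^d=bc$; writing $e=aa^d$, this is $bc\,e=bc$, and it is exactly this identity, together with $a^d(bc)=(bc)a^d$, that powered the computations in Lemma 3.5. I will use freely that $a(a^d)^2=a^d$ and $(a^d)^2a^2=e$.

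For $(2)\Rightarrow(1)$ I would first note $(a^d)^2bc\in\mathcal{A}^d$ by \cite[Theorem 5.5]{K}, whence $\left(\begin{array}{cc}1&1\\(a^d)^2bc&0\end{array}\right)\in M_2(\mathcal{A})^d$ by Lemma 3.2. Using $bc\,e=bc$ one checks $a(a^d)^2bc=(a^d)^2bca$, so this matrix commutes with $\mathrm{diag}(a,a)$ and hence $\mathrm{diag}(a,a)\left(\begin{array}{cc}1&1\\(a^d)^2bc&0\end{array}\right)=\left(\begin{array}{cc}a&a\\bca^d&0\end{array}\right)\in M_2(\mathcal{A})^d$, again by \cite[Theorem 5.5]{K}. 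Factoring this as $\left(\begin{array}{cc}a&0\\0&b\end{array}\right)\left(\begin{array}{cc}1&1\\ca^d&0\end{array}\right)$ and applying Cline's formula give $\left(\begin{array}{cc}a&b\\caa^d&0\end{array}\right)\in M_2(\mathcal{A})^d$. Finally I would split $M=\left(\begin{array}{cc}a&b\\caa^d&0\end{array}\right)+\left(\begin{array}{cc}0&0\\ca^\pi&0\end{array}\right)$: the second summand has square $0$, and the product of the first summand by the second is $\left(\begin{array}{cc}bca^\pi&0\\0&0\end{array}\right)=0$ by hypothesis, so \cite[Theorem 2.2]{DW} yields $M\in M_2(\mathcal{A})^d$.

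For $(1)\Rightarrow(2)$ I would reduce $M\in M_2(\mathcal{A})^d$ to $\left(\begin{array}{cc}a&1\\bc&0\end{array}\right)\in M_2(\mathcal{A})^d$ by two applications of Cline's formula, through $M=\left(\begin{array}{cc}1&0\\0&c\end{array}\right)\left(\begin{array}{cc}a&b\\1&0\end{array}\right)$ and $\left(\begin{array}{cc}a&bc\\1&0\end{array}\right)=\left(\begin{array}{cc}a&1\\1&0\end{array}\right)\left(\begin{array}{cc}1&0\\0&bc\end{array}\right)$; note this reduction needs no extra hypothesis. From there I would repeat Lemma 3.5 verbatim: $a^d(bc)=(bc)a^d$ and \cite[Theorem 3.1]{ZMC} give $\left(\begin{array}{cc}a^da&a^d\\a^dbc&0\end{array}\right)\in M_2(\mathcal{A})^d$, Cline's formula promotes this to $\left(\begin{array}{cc}aa^d&aa^d\\(a^d)^2bc&0\end{array}\right)\in M_2(\mathcal{A})^d$, and writing $\left(\begin{array}{cc}1&1\\(a^d)^2bc&0\end{array}\right)=\left(\begin{array}{cc}a^\pi&a^\pi\\0&0\end{array}\right)+\left(\begin{array}{cc}aa^d&aa^d\\(a^d)^2bc&0\end{array}\right)$ (the product of the summands, in this order, being $0$) and invoking \cite[Theorem 2.2]{DW} gives $\left(\begin{array}{cc}1&1\\(a^d)^2bc&0\end{array}\right)\in M_2(\mathcal{A})^d$. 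Lemma 3.2 then yields $(a^d)^2bc\in\mathcal{A}^d$, and since $bc=bc(a^d)^2a^2=(a^d)^2bca^2$ — where $bc\,e=bc$ enters once more — \cite[Theorem 3.1]{ZMC} delivers $bc\in\mathcal{A}^d$.

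The main obstacle is the concluding step of $(2)\Rightarrow(1)$: under the weaker hypothesis one cannot write $M=\left(\begin{array}{cc}1&1\\ca^d&0\end{array}\right)\left(\begin{array}{cc}a&0\\0&b\end{array}\right)$, so the single Cline's-formula finish of Lemma 3.5 is unavailable and must be replaced by the additive splitting above. The delicate point is the orientation demanded by \cite[Theorem 2.2]{DW}, namely that the product of the two summands in the written order vanish; it is precisely $bca^\pi=0$ (and not the stronger $ca^\pi=0$) that makes $\left(\begin{array}{cc}a&b\\caa^d&0\end{array}\right)\left(\begin{array}{cc}0&0\\ca^\pi&0\end{array}\right)=0$, even though the reversed product does not. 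The only other care needed is deriving the identities $a(a^d)^2bc=(a^d)^2bca$ and $bc=bc(a^d)^2a^2$ from $a^d(bc)=(bc)a^d$ together with $bca^\pi=0$.
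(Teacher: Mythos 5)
Your proposal is correct and takes essentially the same approach as the paper: the paper likewise reduces to the case $caa^d=c$ by splitting $M=P+Q$ with $P=\left(\begin{smallmatrix}a&b\\caa^d&0\end{smallmatrix}\right)$, $Q=\left(\begin{smallmatrix}0&0\\ca^{\pi}&0\end{smallmatrix}\right)$, $PQ=Q^2=0$ and \cite[Theorem 2.2]{DW}, merely citing Lemma 3.5 for $P$ where you inline its proof. The only (harmless) divergence is in $(1)\Rightarrow(2)$, where the paper first passes from $M$ to $P$ by the same additive perturbation and then invokes Lemma 3.5, while you run the Lemma 3.5 argument directly on $M$ after observing that its opening Cline reduction needs no hypothesis on $c$.
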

\begin{enumerate}
\item [(1)]{\it $M\in M_2(\mathcal{A})^d$.} \vspace{-.5mm}
\item [(2)]{\it $bc\in \mathcal{A}^d.$}\\
\end{enumerate}
\begin{proof} $(2)\Rightarrow (1)$ Let $c'=caa^d$. Since $bca^{\pi}=0$, we have $bc=bcaa^d$. We see that
$$M=P+Q, P=\left(
\begin{array}{cc}
a&b\\
c'&0
\end{array}
\right), Q=\left(
\begin{array}{cc}
0&0\\
ca^{\pi}&0
\end{array}
\right).$$ Clearly, $PQ=0$ and $Q^2=0$. Since $c'a^{\pi}=0, a^dbc'=bc'a^d$ and $bc'=bc\in \mathcal{A}^d$, it follows by Lemma 3.5 that
$P$ has g-Drazin inverse. In light of ~\cite[Theorem 2.2]{DW}, $M$ has g-Drazin inverse, as required.

$(1)\Rightarrow (2)$ One easily checks that
$$\left(
\begin{array}{cc}
a&b\\
c'&0
\end{array}
\right)=M+N, N=\left(
\begin{array}{cc}
0&0\\
-ca^{\pi}&0
\end{array}
\right).$$ Clearly, $MN=0$ and $N^2=0$. In view of~\cite[Theorem 2.2]{DW}, $\left(
\begin{array}{cc}
a&b\\
c'&0
\end{array}
\right)$ has g-Drazin inverse. Moreover, $c'a^{\pi}=0, a^dbc'=bc'a^d$ and $bc'=bc\in \mathcal{A}^d$. According to Lemma 3.5,
$bc=bc'$ has g-Drazin inverse, as asserted.\end{proof}

\begin{cor} Let $M=\left(
\begin{array}{cc}
a&b\\
c&0
\end{array}
\right)\in M_2(\mathcal{A})$. If $a\in \mathcal{A}^d, a^{\pi}bc=0$ and $abc=bca$, then the following are equivalent:
\end{cor}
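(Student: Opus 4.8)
The plan is to reduce Corollary 3.8 directly to Theorem 3.7. The equivalence to be established is that $M\in M_2(\mathcal{A})^d$ if and only if $bc\in\mathcal{A}^d$, and the two theorems differ only in their side hypotheses: Theorem 3.7 assumes $bca^{\pi}=0$ and $a^dbc=bca^d$, whereas here we are given $a^{\pi}bc=0$ and $abc=bca$. So the entire task is to show that, under the present hypotheses, $bc$ commutes with both $a^d$ and the spectral idempotent $a^{\pi}$, which converts the given conditions into exactly those of Theorem 3.7.

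First I would read $abc=bca$ as the single statement $bc\in comm(a)$. The structural fact I intend to invoke is that the g-Drazin inverse lies in the double commutant of $a$, that is, $a^d\in comm^2(a)$; consequently $a^d$ commutes with every element of $comm(a)$, and in particular $a^d(bc)=(bc)a^d$. This is precisely the commutation hypothesis $a^dbc=bca^d$ required by Theorem 3.7. Next I would observe that $comm^2(a)$ is a subalgebra containing both $a$ and $a^d$, hence it contains $aa^d$ and therefore the spectral idempotent $a^{\pi}=1-aa^d$. Thus $a^{\pi}$ also commutes with $bc$, giving $a^{\pi}bc=bca^{\pi}$; in particular the given condition $a^{\pi}bc=0$ is equivalent to $bca^{\pi}=0$, the remaining hypothesis of Theorem 3.7.

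Having verified that all three hypotheses of Theorem 3.7 hold, namely $a\in\mathcal{A}^d$, $bca^{\pi}=0$, and $a^dbc=bca^d$, I would simply apply Theorem 3.7 to obtain the stated equivalence. I do not anticipate a real obstacle here; the proof is essentially a translation argument, and the only point demanding care is the appeal to the double-commutant property of $a^d$, which is exactly what transfers the $a$-commutation and left-annihilation hypotheses of the corollary into the $a^d$-commutation and right-annihilation hypotheses of the earlier theorem.
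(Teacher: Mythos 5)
Your proposal is correct and matches the paper's own argument: the paper likewise uses the double-commutant property of the g-Drazin inverse (citing Koliha's Theorem 4.4) to get $a^d(bc)=(bc)a^d$, deduces $bca^{\pi}=a^{\pi}bc=0$, and then applies the preceding theorem.
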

\begin{enumerate}
\item [(1)]{\it $M\in M_2(\mathcal{A})^d$.} \vspace{-.5mm}
\item [(2)]{\it $bc\in \mathcal{A}^d.$}\\
\end{enumerate}
\begin{proof} Since $a(bc)=(bc)a$ and $a$ has g-Drazin inverse, by ~|cite[Theorem 4.4]{K}, $a^d(bc)=(bc)a^d$, and so $0=a^{\pi}bc=(1-aa^d)bc=bc(1-aa^d)=bca^{\pi}$. The corollary is therefore established by Theorem 3.6.\end{proof}

\vskip10mm

\end{document}